\theoremstyle{plain}
\newtheorem{them}{Théorème}[]
\newtheorem{propo}{Proposition}[]
\newtheorem{lem}{Lemme}[]
\theoremstyle{remark}
\def\NN{\mathds{N}}
\def\QQ{\mathbb{Q}}
\def\ZZ{\mathbb{Z}}
\def\l{\mathds{L}}
\begin{document}

\title[ Sur l'équation...]{Sur l'équation $X^2-\varepsilon_2\varepsilon_{p_1p_2}\varepsilon_{2p_1p_2}=0$.}

%premier auteur
\author[Abdelmalek AZIZI]{Abdelmalek Azizi}
\address{Abdelmalek Azizi et Abdelkader Zekhnini: Département de Mathématiques, Faculté des Sciences, Université Mohammed 1, Oujda, Morocco }
% deuxième auteur
\author{Abdelkader Zekhnini}
% troisième auteur
\email{abdelmalekazizi@yahoo.fr}
\email{zekha1@yahoo.fr}

\author[Mohammed Taous]{Mohammed Taous}
\address{Mohammed Taous: Département de Mathématiques, Faculté des Sciences et Techniques, Université Moulay Ismail, Errachidia, Morocco}
\email{taousm@hotmail.com}
\maketitle

\begin{abstract}
 Let $p_1\equiv p_2\equiv5 \pmod8$ be prime numbers such that $\left(\frac{p_1}{p_2}\right)=-1$. Let $\l=\QQ(\sqrt2, \sqrt{p_1p_2})$ Our goal is to  resolve the equation $X^2-\varepsilon_2\varepsilon_{p_1p_2}\varepsilon_{2p_1p_2}=0$ in $\l$, where  $\varepsilon_j$ are fundamental units of real quadratic subfields  of $\QQ(\sqrt2, \sqrt{p_1p_2})$.
\end{abstract}
\selectlanguage{francais}
\begin{abstract}
 Soient $p_1$ et $p_2$ deux nombres premiers tels que $p_1\equiv p_2\equiv5 \pmod
 8$ et $\left(\frac{p_1}{p_2}\right)=-1$; posons $\l=\QQ(\sqrt2, \sqrt{p_1p_2})$. Dans ce papier notre but est de caractériser, par les symboles biquadratiques,
 l'existence de solutions de l'équation $X^2-\varepsilon_2\varepsilon_{p_1p_2}\varepsilon_{2p_1p_2}=0$ dans $\l$, où les $\varepsilon_j$ sont les unités fondamentales des sous-corps quadratiques réels de $\l$.
 \end{abstract}
 \section{Introduction}
 Soient  $d_1$, $d_2$ deux entiers naturels différents et sans facteurs carrés,  $K=\QQ(\sqrt{d_1}, \sqrt{d_2})$, $\varepsilon_1$ (resp. $\varepsilon_2$, $\varepsilon_3$) l'unité fondamentale de $k_1=\QQ(\sqrt{d_1})$ (resp. $k_2=\QQ(\sqrt{d_2})$, $k_3=\QQ(\sqrt{d_1d_2})$). On suppose que $\varepsilon_1$, $\varepsilon_2$ et $\varepsilon_3$ sont de norme -1. Nombreux sont les mathématiciens qui ont travaillé sur les unités du corps $K$ et ont donné le système fondamental d'unités (SFU) de $K$ en fonction des trois unités $\varepsilon_1$, $\varepsilon_2$ et $\varepsilon_3$, mais le problème était de connaître quand est ce que le produit $\varepsilon_1\varepsilon_2\varepsilon_3$ est un carré dans $K$. En posant
 \begin{align*}
 \alpha_1&=\varepsilon_1\varepsilon_2\varepsilon_3+\varepsilon_1+\varepsilon_2-\varepsilon_3\\ \alpha_2&=\varepsilon_1\varepsilon_2\varepsilon_3+\varepsilon_1-\varepsilon_2+\varepsilon_3\\ \alpha_3&=\varepsilon_1\varepsilon_2\varepsilon_3-\varepsilon_1+\varepsilon_2+\varepsilon_3\\  \alpha_4&=\varepsilon_1\varepsilon_2\varepsilon_3-\varepsilon_1-\varepsilon_2-\varepsilon_3\\ c_j&=trace_{K/\QQ}(\alpha_j), \text{ où } j\in J=\{1, 2, 3, 4\}.
 \end{align*}
 T. Kubota dans \cite{Kub-56} a prouvé que $c_j=\frac{\alpha_j^2}{\varepsilon_1\varepsilon_2\varepsilon_3}$ pour tout $j\in J$, donc $\varepsilon_1\varepsilon_2\varepsilon_3$ est un carré dans $K$ si et seulement s'il existe un $j\in J$ tel que $c_j$ est un carré dans $K$. Sur ce résultat est basé A. Azizi dans \cite{Az-05} pour conclure que  $\varepsilon_1\varepsilon_2\varepsilon_3$ est un carré dans $K$ ssi il existe $(i, j)\in \{0, 1\}^2$ tel que $d_1^{i}d_2^{j}trace_{K/\QQ}(\alpha_1)$ est un carré dans $\NN$; on trouve la même chose chez M. Hirabayashi et K. Yoshino dans \cite{HiYo} qui ont conclu que   $\varepsilon_1\varepsilon_2\varepsilon_3$ est un carré dans $K$ ssi  $trace_{K/\QQ}(\alpha_3)$ est un carré dans $K$; de même H. Wada a utilisé dans \cite{Wa-66} les $trace_{K/\QQ}(\alpha_j)$ pour trouver dans quel cas $\varepsilon_1\varepsilon_2\varepsilon_3$ est un carré dans $K$.   Dans ce papier on répond à cette question on se  basant sur les symboles rationales biquadratiques dans le cas $d_1=2$ et $d_2=p_1p_2$, où $p_1$ et $p_2$ sont des nombres premiers.\par
 Soient $p_1$ et $p_2$ deux numbers premiers tels que $p_1\equiv p_2\equiv5 \pmod8$, $\left(\frac{p_1}{p_2}\right)=-1$. Posons  $\l=\QQ(\sqrt2, \sqrt{p_1p_2})$, $\varepsilon_2$ ( resp. $\varepsilon_{p_1p_2}$, $\varepsilon_{2p_1p_2}$ ) l'unité fondamentale de $k_1=\QQ(\sqrt2)$ (resp.  $k_2=\QQ(\sqrt{p_1p_2})$, $k_3=\QQ(\sqrt{2p_1p_2})$), alors  $N(\varepsilon_2)=N(\varepsilon_{p_1p_2})=N(\varepsilon_{2p_1p_2})=-1$ (voir Lemme \ref{2}), donc d'après S. Kuroda \cite{Kur-43}  un SFU de $\l=\QQ(\sqrt2, \sqrt{p_1p_2})$ est  $\{\varepsilon_2, \varepsilon_{p_1p_2}, \sqrt{\varepsilon_2\varepsilon_{p_1p_2}\varepsilon_{2p_1p_2}}\}$  ou bien $\{\varepsilon_2, \varepsilon_{p_1p_2}, \varepsilon_{2p_1p_2}\}$ suivant que l'équation $X^2-\varepsilon_2\varepsilon_{p_1p_2}\varepsilon_{2p_1p_2}=0$ admet ou non une solution dans $\l$, dans le théorème \ref{4}, on a utilisé les symboles biquadratiques pour déterminer dans quel cas l'unité $\varepsilon_2\varepsilon_{p_1p_2}\varepsilon_{2p_1p_2}$ est un carré dans $\l$ et nous avons  donné une relation entre cette réponse et le $2$-nombre de classes de $\QQ(\sqrt{-p_1p_2})$.
  \section{Le résultat.}
   On commence  par donner quelques résultats qui nous serons utiles par la suite. Soient $a$ un entier naturel sans facteurs carrés et $Q$ l'indice d'unités du corps $\QQ(\sqrt a,i)$, alors on a :
\begin{lem}\label{1}
Si l'une des conditions suivantes est vérifiée, alors $Q=1.$
\begin{enumerate}
  \item $a$ est congru à 1 modulo 4.
  \item Il existe un entier impair $a'$ qui divise $a$ tel que $a'\equiv5\pmod 8.$
\end{enumerate}
\end{lem}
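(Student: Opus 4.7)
The Hasse unit index $Q = [E_K : W_K\langle\varepsilon_a\rangle]$ of the CM field $K = \mathbb{Q}(\sqrt{a},\,i)$ over its maximal totally real subfield $\mathbb{Q}(\sqrt{a})$ takes values in $\{1,2\}$, where $\varepsilon_a$ denotes the fundamental unit of $\mathbb{Q}(\sqrt{a})$. Under either hypothesis one has $a \neq 2, 3$, so the only roots of unity in $K$ are the fourth roots and $W_K = \{\pm 1, \pm i\}$. To show $Q = 1$ it therefore suffices to rule out the existence of $\eta \in E_K$ with $\eta^2 = \zeta\,\varepsilon_a$ for some $\zeta \in W_K$.

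\textbf{Step 1: reduction to a square condition.} Writing $\eta = x + yi$ with $x, y \in \mathbb{Q}(\sqrt{a})$ and expanding $\eta^2 = (x^2 - y^2) + 2xy\,i$, I match real and imaginary parts against $\zeta\,\varepsilon_a$. The cases $\zeta = \pm 1$ force $xy = 0$, and the sub-cases $x = 0$ and $y = 0$ each contradict either the positivity of $\varepsilon_a$ in the principal real embedding or the fundamental-unit property of $\varepsilon_a$ (any square root in $\mathbb{Q}(\sqrt{a})$ would be a smaller unit). The cases $\zeta = \pm i$ give $x^2 = y^2$ and collapse to the single equation $\varepsilon_a = 2x^2$; equivalently, setting $v := 2x$, to the existence of $v \in \mathcal{O}_{\mathbb{Q}(\sqrt{a})}$ with $v^2 = 2\,\varepsilon_a$. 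Hence $Q = 2$ is equivalent to this concrete condition.

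\textbf{Step 2: ruling it out.} Passing to ideals, $v^2 = 2\varepsilon_a$ yields $(v)^2 = (2)$ in $\mathcal{O}_{\mathbb{Q}(\sqrt{a})}$. Under hypothesis~(1) the prime $2$ is unramified in $\mathbb{Q}(\sqrt{a})$ (inert if $a \equiv 5 \pmod 8$, split if $a \equiv 1 \pmod 8$), and then by unique factorization $(2)$ has no ideal square root, so no such $v$ exists and $Q = 1$. Under hypothesis~(2) I may assume $a \not\equiv 1 \pmod 4$ (otherwise case (1) already applies), whence $\mathcal{O}_{\mathbb{Q}(\sqrt{a})} = \mathbb{Z}[\sqrt{a}]$ and $v = p + q\sqrt{a}$ with $p,q \in \mathbb{Z}$. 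Taking $N_{\mathbb{Q}(\sqrt{a})/\mathbb{Q}}$ of $v^2 = 2\varepsilon_a$ gives $N(v)^2 = 4\,N(\varepsilon_a)$, which rules out $N(\varepsilon_a) = -1$ and yields $p^2 - aq^2 = \pm 2$. Reducing modulo $a'$ produces $p^2 \equiv \pm 2 \pmod{a'}$, but for $a' \equiv 5 \pmod 8$ the Jacobi-symbol identities $\left(\tfrac{2}{a'}\right) = -1$ and $\left(\tfrac{-1}{a'}\right) = 1$ combine to give $\left(\tfrac{\pm 2}{a'}\right) = -1$, a contradiction. The main obstacle is really the careful case analysis of Step~1, where one must check all four values of $\zeta$ and verify that $\zeta = \pm i$ both yield the same condition; once $v^2 = 2\varepsilon_a$ is established, the ideal-theoretic argument for~(1) and the quadratic-residue computation for~(2) are both immediate.
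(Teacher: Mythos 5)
The paper does not actually prove this lemma: its ``proof'' is the single line \emph{Voir corollaire 3.2 de} \cite{AzTa08}, so there is no internal argument to set yours against. Your proof is correct and self-contained, and it reconstructs the classical route that lies behind that reference: first the Hasse--Kubota reduction, namely that for $a\neq 2,3$ (so that $W_K=\{\pm1,\pm i\}$ and $\zeta_8\notin K$) one has $Q=2$ if and only if $2\varepsilon_a$ is a square in $\mathbb{Q}(\sqrt a)$; then the two elementary verifications, the ideal equation $(v)^2=(2)$ being impossible when $2$ is unramified (case $a\equiv1\pmod4$), and the congruence $p^2\equiv\pm2\pmod{a'}$ being incompatible with $\left(\tfrac{2}{a'}\right)=\left(\tfrac{-2}{a'}\right)=-1$ when $a'\equiv5\pmod8$. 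Two points deserve to be made explicit to close the argument completely. In Step 1, the passage from ``$E_K\neq W_K\langle\varepsilon_a\rangle$'' to ``$\eta^2=\zeta\varepsilon_a$ for some $\zeta\in W_K$'' requires observing that any $\eta$ outside $W_K\langle\varepsilon_a\rangle$ satisfies $\eta^2=i^j\varepsilon_a^k$, that $k$ even would force $i$ to be a square in $K$ (excluded since $a\neq2$) or $\eta\in W_K\langle\varepsilon_a\rangle$, and that for $k$ odd one replaces $\eta$ by $\eta\varepsilon_a^{-(k-1)/2}$. In Step 2, the deduction $\left(\tfrac{\pm2}{a'}\right)=+1$ from $p^2\equiv\pm2\pmod{a'}$ uses $\gcd(p,a')=1$ (immediate because $a'$ is odd) and the fact that $a'$, as a divisor of the squarefree $a$, is squarefree, so the Jacobi symbol is the product of Legendre symbols each equal to $+1$. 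With these remarks added your argument is a complete and independent proof of the lemma, which is arguably preferable to the paper's bare citation.
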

\proof Voir corollaire 3.2 de \cite{AzTa08}.
\begin{lem}\label{2}
Si $d=2p_1p_2$ avec $p_1$ et $p_2$ sont deux nombres  premiers tels
que $p_1\equiv p_2\equiv1\pmod 4$ et  au moins deux des éléments \{$(\frac{p_1}{p_2})$, $(\frac{2}{p_1})$, $(\frac{2}{p_2})$\} valent -1, alors la norme de l'unité fondamentale de $\QQ(\sqrt{2p_1p_2})$ est égale à -1.
\end{lem}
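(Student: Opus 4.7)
The plan is to argue by contradiction. Suppose $N(\varepsilon_d) = +1$. Since $d = 2p_1p_2 \equiv 2 \pmod 4$, one writes $\varepsilon_d = T + U\sqrt{d}$ with $T, U > 0$ integers and $T^2 - 2p_1p_2 U^2 = 1$, and $(T,U)$ is the minimal positive solution of this Pell equation. Factoring $(T-1)(T+1) = 2p_1p_2 U^2$ and comparing parities forces $T$ odd; writing $T-1 = 2a$, $T+1 = 2b$ with $\gcd(a,b)=1$ and $b-a=1$, a second parity step forces $U = 2V$, so that $ab = 2p_1p_2 V^2$.

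Since $a$ and $b$ are coprime, the three primes $2, p_1, p_2$ partition between them, producing eight subcases indexed by $D \mid 2p_1p_2$ (with $D' = 2p_1p_2/D$): $a = Dv^2$, $b = D'w^2$, $V = vw$, yielding the equation $D'w^2 - Dv^2 = 1$. Two extreme subcases are eliminated immediately: $D = 1$ forces $v^2 \equiv -1 \pmod 8$, which is impossible; and $D = 2p_1p_2$ yields a pair $(w,v)$ satisfying $w^2 - 2p_1p_2 v^2 = 1$ with $w < T$ and $v < U$, contradicting the minimality of $(T,U)$. For each of the remaining six subcases, I reduce modulo $p_1$, $p_2$, and $8$, using $p_i \equiv 1 \pmod 4$ (whence $\left(\frac{-1}{p_i}\right) = 1$) together with quadratic reciprocity between $p_1$ and $p_2$, which pins down a necessary pair of Legendre-symbol equalities.

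The six surviving cases group into three pairs according to which two symbols are forced to be $+1$: $D \in \{2, p_1p_2\}$ requires $\left(\frac{2}{p_1}\right) = \left(\frac{2}{p_2}\right) = 1$; $D \in \{p_1, 2p_2\}$ requires $\left(\frac{2}{p_1}\right) = \left(\frac{p_1}{p_2}\right) = 1$; and $D \in \{p_2, 2p_1\}$ requires $\left(\frac{2}{p_2}\right) = \left(\frac{p_1}{p_2}\right) = 1$. In every alternative, at least two of the three symbols $\left(\frac{p_1}{p_2}\right), \left(\frac{2}{p_1}\right), \left(\frac{2}{p_2}\right)$ equal $+1$, i.e., at most one equals $-1$. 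This is the precise negation of the hypothesis that at least two equal $-1$, so the assumption $N(\varepsilon_d) = +1$ is untenable and $N(\varepsilon_d) = -1$.

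The principal obstacle is the meticulous modular bookkeeping in the six subcases. In particular, the mod-$8$ step is delicate because $v$ or $w$ may be even, so one must treat those parities separately and track how the residues of $p_1, p_2 \pmod 8$ feed into the Legendre-symbol conditions, and one must verify that the three extracted pair-conditions exhaust all possibilities so that their simultaneous failure is indeed equivalent to the hypothesis.
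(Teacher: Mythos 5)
The paper itself offers no argument for this lemma beyond a citation of Corollaire 3.6 de \cite{AzTa08}, so your self-contained descent through the Pell equation is necessarily a different route, and it is the classical one: factor $T^2-1=2p_1p_2U^2$, split $ab=2p_1p_2V^2$ according to the squarefree divisor $D\mid 2p_1p_2$ carried by $a$, and show that each surviving $D$ forces at least two of the three Legendre symbols to equal $+1$. I checked your six middle cases and the pairings are correct: for instance $D=2$ gives $-2v^2\equiv 1\pmod{p_i}$, hence $\left(\frac{2}{p_i}\right)=1$ for $i=1,2$ using $p_i\equiv1\pmod4$; and for $D=p_1$ the congruence $p_1\equiv 2p_2w^2-1\pmod 8$ rules out $w$ even (which would give $p_1\equiv-1\pmod 8$) and then yields $p_1\equiv1\pmod8$, whence $\left(\frac{2}{p_1}\right)=1$ and, via $\left(\frac{2p_2}{p_1}\right)=1$ and reciprocity, $\left(\frac{p_1}{p_2}\right)=1$. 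The logical endgame --- each admissible case leaves at most one symbol equal to $-1$, contradicting the hypothesis --- is also sound.

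The one genuine flaw is your elimination of $D=1$. There the equation is $2p_1p_2w^2-v^2=1$, and nothing forces $w$ to be even: for $w$ odd one gets $v^2\equiv 2p_1p_2-1\equiv 1\pmod 8$ (since $p_1p_2\equiv 1\pmod 4$), so there is no contradiction mod $8$, and reduction mod $p_i$ only gives $v^2\equiv-1\pmod{p_i}$, which is harmless because $p_i\equiv1\pmod 4$. This case cannot be killed by congruences at all; it must be handled like your $D=2p_1p_2$ case, by noting that $v^2-2p_1p_2w^2=-1$ exhibits a unit $v+w\sqrt{2p_1p_2}$ of norm $-1$, contradicting the standing assumption $N(\varepsilon_d)=+1$ outright. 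With that repair the argument is complete.
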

\proof Voir corollaire 3.6. de \cite{AzTa08}.
\begin{propo}\label{3}
Soient $p_1$ et $p_2$ deux nombres premiers tels que $p_1\equiv p_2\equiv1 \pmod4$ et $\displaystyle\left(\frac{2}{p_1}\right)=\displaystyle\left(\frac{2}{p_2}\right)=\displaystyle\left(\frac{p_1}{p_2}\right)$, posons $p_1=\pi_1\pi_2=(a_1+2ib_1)(a_1-2ib_1)$ et $p_2=\pi_3\pi_4=(a_2+2ib_2)(a_2-2ib_2)$. Alors on a:
$$\displaystyle\left(\frac{p_1p_2}{2}\right)_4\displaystyle\left(\frac{2p_1}{p_2}\right)_4
\displaystyle\left(\frac{2p_2}{p_1}\right)_4=\displaystyle\left(\frac{\pi_3}{\pi_1}\right)
\displaystyle\left(\frac{2}{a_1+2b_1}\right)\displaystyle\left(\frac{2}{a_2+2b_2}\right).$$
\end{propo}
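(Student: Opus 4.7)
The plan is to convert each of the three rational biquadratic symbols on the left into quartic residue symbols in $\mathbb{Z}[i]$ via the splittings $p_1 = \pi_1\pi_2$ and $p_2 = \pi_3\pi_4$, to simplify the resulting product by biquadratic reciprocity in $\mathbb{Z}[i]$ together with the classical Gauss formula for the quartic character of $2$ at a primary Gaussian prime, and finally to recognize the residual phases as the Jacobi symbols $\left(\frac{2}{a_j+2b_j}\right)$ on the right.

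First I would check that each symbol on the left is defined. Since $p_1\equiv p_2\equiv 1\pmod 4$, quadratic reciprocity gives $\left(\frac{p_1}{p_2}\right)=\left(\frac{p_2}{p_1}\right)$, so the hypothesis forces $\left(\frac{2p_1}{p_2}\right)=\left(\frac{2p_2}{p_1}\right)=1$ and $p_1p_2\equiv 1\pmod 8$. Thus all three rational biquadratic symbols on the left take values in $\{\pm 1\}$.

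The decomposition rests on the identity $\left(\frac{n}{p_j}\right)_4=\left(\frac{n}{\pi_j}\right)_4$ for a rational $n$ with $\left(\frac{n}{p_j}\right)=1$, since both sides equal $n^{(p_j-1)/4}$ modulo $\pi_j$ and already lie in $\{\pm 1\}$. Applying this together with multiplicativity I would write
\begin{align*}
\left(\frac{2p_1}{p_2}\right)_4 &= \left(\frac{2}{\pi_3}\right)_4\left(\frac{\pi_1}{\pi_3}\right)_4\left(\frac{\pi_2}{\pi_3}\right)_4,\\
\left(\frac{2p_2}{p_1}\right)_4 &= \left(\frac{2}{\pi_1}\right)_4\left(\frac{\pi_3}{\pi_1}\right)_4\left(\frac{\pi_4}{\pi_1}\right)_4,
\end{align*}
and treat $\left(\frac{p_1p_2}{2}\right)_4$ analogously through the supplementary law for the prime $1+i$ above $2$. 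Biquadratic reciprocity in $\mathbb{Z}[i]$, $\left(\frac{\pi}{\pi'}\right)_4=\left(\frac{\pi'}{\pi}\right)_4$ for primary Gaussian primes (with explicit unit corrections from the primary normalization of each $\pi_j=a_j\pm 2ib_j$), combined with Gauss's explicit formula $\left(\frac{2}{\pi}\right)_4=i^{ab/2}$ at a primary $\pi=a+2ib$, rewrites every Gaussian quartic symbol either as a phase in $\{1,i,-1,-i\}$ depending on $a_j,b_j$, or as the quadratic symbol $\left(\frac{\pi_3}{\pi_1}\right)$. In particular, using $\pi_2=\bar\pi_1$ and $\pi_4=\bar\pi_3$, the four cross-symbols $\left(\frac{\pi_1}{\pi_3}\right)_4\left(\frac{\pi_2}{\pi_3}\right)_4\left(\frac{\pi_3}{\pi_1}\right)_4\left(\frac{\pi_4}{\pi_1}\right)_4$ collapse to $\left(\frac{\pi_3}{\pi_1}\right)$.

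It then remains to collect the accumulated powers of $i$ coming from the various instances of Gauss's formula and from the reciprocity unit corrections, observe that the total is real (the left-hand side being $\pm 1$), and identify this sign with $\left(\frac{2}{a_1+2b_1}\right)\left(\frac{2}{a_2+2b_2}\right)$ by applying the classical formula $\left(\frac{2}{m}\right)=(-1)^{(m^2-1)/8}$ to $m=a_j+2b_j$. The hard part will be exactly this bookkeeping of $i$-phases: one has to fix the primary associate of each $\pi_j$, keep track of the unit corrections both in reciprocity and in Gauss's formula, and verify that the residual phase simplifies to precisely these two Jacobi symbols and not to some other sign depending on $a_j$ or $b_j\bmod 4$.
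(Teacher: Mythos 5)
Your plan follows the same outline as the paper's proof: replace each rational biquadratic symbol by a Gaussian quartic residue symbol modulo one prime above $p_j$, use multiplicativity and biquadratic reciprocity in $\ZZ[i]$ to collapse the cross terms $\left(\frac{p_1}{\pi_3}\right)_4\left(\frac{p_2}{\pi_1}\right)_4$ to the quadratic symbol $\left(\frac{\pi_3}{\pi_1}\right)$ (your verification via $\pi_2=\bar\pi_1$, $\pi_4=\bar\pi_3$ is correct, and the reciprocity unit corrections do cancel in pairs there), and then evaluate the character of $2$ at $\pi_1$ and $\pi_3$ by a supplementary law. Up to that point the two arguments coincide.

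What you have not supplied is exactly the step you flag as ``the hard part,'' and that step is the one that actually produces the right-hand side, so as written the argument is incomplete rather than wrong. The paper sidesteps your phase bookkeeping by not using Gauss's formula for $\left(\frac{2}{\pi}\right)_4$ at all: it factors $2=i^3(1+i)^2$, so that
$$\left(\frac{2}{\pi_1}\right)_4\left(\frac{2}{\pi_3}\right)_4=\left(\frac{i}{\pi_1}\right)_4\left(\frac{i}{\pi_3}\right)_4\left(\frac{1+i}{\pi_1}\right)\left(\frac{1+i}{\pi_3}\right),$$
where the $(1+i)$-symbols are only quadratic. The supplementary law for $i$ gives $\left(\frac{i}{\pi_1}\right)_4\left(\frac{i}{\pi_3}\right)_4=i^{(p_1+p_2-2)/4}=(-1)^{(p_1p_2-1)/8}=\left(\frac{p_1p_2}{2}\right)_4$, which is precisely the factor needed to move $\left(\frac{p_1p_2}{2}\right)_4$ to the other side of the identity, and the classical evaluation $\left(\frac{1+i}{\pi_j}\right)=\left(\frac{2}{a_j+2b_j}\right)$ for primary $\pi_j=a_j+2ib_j$ yields the two Jacobi symbols directly, with no residual phase left to track. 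If you insist on Gauss's formula instead, you must prove the identity $i^{a_1b_1+a_2b_2}=(-1)^{(p_1p_2-1)/8}\left(\frac{2}{a_1+2b_1}\right)\left(\frac{2}{a_2+2b_2}\right)$, which is true but is essentially the content of the proposition; the factorization of $2$ is the device that reduces this to a two-line computation, and your write-up needs one or the other to be a proof.
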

\begin{proof}
Posons $p_1=\pi_1\pi_2=(a_1+2ib_1)(a_1-2ib_1)$ et $p_2=\pi_3\pi_4=(a_2+2ib_2)(a_2-2ib_2)$, alors par la loi de réciprocité biquadratique  on a:
\begin{align*}
\displaystyle\left(\frac{2p_1}{p_2}\right)_4\displaystyle\left(\frac{2p_2}{p_1}\right)_4
&=\displaystyle\left(\frac{2p_1}{\pi_3}\right)_4\displaystyle\left(\frac{2p_2}{\pi_1}\right)_4\\
&=\displaystyle\left(\frac{2}{\pi_1\pi_3}\right)_4\displaystyle\left(\frac{p_1}{\pi_3}\right)_4\displaystyle\left(\frac{p_2}{\pi_1}\right)_4\\
&=\displaystyle\left(\frac{2}{\pi_1\pi_3}\right)_4\displaystyle\left(\frac{\pi_3}{\pi_1}\right).
\end{align*}
Puisque $2=i^3(1+i)^2$, alors
\begin{align*}
 \displaystyle\left(\frac{2}{\pi_1}\right)_4\displaystyle\left(\frac{2}{\pi_3}\right)_4
 &=\displaystyle\left(\frac{i^3(1+i)^2}{\pi_1}\right)_4\displaystyle\left(\frac{i^3(1+i)^2}{\pi_3}\right)_4\\
 &=\displaystyle\left(\frac{i}{\pi_1}\right)_4\displaystyle\left(\frac{1+i}{\pi_1}\right)
 \displaystyle\left(\frac{i}{\pi_3}\right)_4\displaystyle\left(\frac{1+i}{\pi_3}\right)\\
 &=i^{\frac{p_1+p_2-2}{4}}\displaystyle\left(\frac{1+i}{\pi_1}\right)\displaystyle\left(\frac{1+i}{\pi_3}\right)\\
 &=i^{\frac{p_1+p_2-2}{4}}\displaystyle\left(\frac{2}{a_1+2b_1}\right)\displaystyle\left(\frac{2}{a_2+2b_2}\right)
 \end{align*}
 D'autre part comme  $\frac{p_1-1}{8}+\frac{p_2-1}{8}\equiv\frac{p_1p_2-1}{8}\pmod2$, alors on a:
$$\displaystyle\left(\frac{p_1p_2}{2}\right)_4=(-1)^{\frac{p_1p_2-1}{8}}=(-1)^{{\frac{p_1+p_2-2}{8}}}=i^{\frac{p_1+p_2-2}{4}}$$ d'où $$ \displaystyle\left(\frac{2}{\pi_1}\right)_4\displaystyle\left(\frac{2}{\pi_3}\right)_4=
\displaystyle\left(\frac{p_1p_2}{2}\right)_4\displaystyle\left(\frac{2}{a_1+2b_1}\right)\displaystyle\left(\frac{2}{a_2+2b_2}\right).$$
Et le résultat en découle.
\end{proof}
\begin{them}\label{4}
Soient $p_1$ et $p_2$ deux nombres premiers tels que $p_1\equiv p_2\equiv5\pmod8$ et  $\displaystyle\left(\frac{p_1}{p_2}\right)=-1$, posons $\l=\QQ(\sqrt2, \sqrt{p_1p_2})$. Alors les assertions suivantes sont équivalentes:
\begin{enumerate}[\upshape\indent(1)]
  \item $\varepsilon_2\varepsilon_{p_1p_2}\varepsilon_{2p_1p_2}$ est un carré dans $\l$.
  \item $\displaystyle\left(\frac{p_1p_2}{2}\right)_4\displaystyle\left(\frac{2p_1}{p_2}\right)_4
\displaystyle\left(\frac{2p_2}{p_1}\right)_4=-1.$
  \item L'indice d'unités du corps $\l$ est égal à $q(\l/\QQ)=2$.
  \item $\mathbf{C}_2(\QQ(\sqrt{-p_1p_2}))$, le $2$-groupe de classes de $\QQ(\sqrt{-p_1p_2})$, est de type $(2, 4)$.
  \item Le  $2$-nombre de classes de $\QQ(\sqrt{-p_1p_2})$ est égal à $h_2(-p_1p_2)=8$.
  \item Le nombre de classes de $\QQ(\sqrt{2p_1p_2})$  capitulant dans  $\QQ(\sqrt2, \sqrt{p_1p_2})$ est $2$.
\end{enumerate}
\end{them}
\begin{proof}
Comme $p_1p_2\equiv1 \pmod8$, alors  il existe un entier pair $x$ et un entier impair  $y$  tels que $\varepsilon_{p_1p_2}=x+y\sqrt{p_1p_2}$, donc $x^2+1=y^2p_1p_2$, par suite d'après \cite{Kw-80}:
 \begin{equation}
 \label{8}
\left\{
 \begin{array}{ll}
 x\mp i=iy_1^2\pi_1\pi_3\\
 x\pm i=-iy_2^2\pi_2\pi_4
 \end{array}\right. \text{ (\ref{8}1) ou }
 \left\{
 \begin{array}{ll}
 x\mp i=iy_1^2\pi_1\pi_4\\
 x\pm i=-iy_2^2\pi_2\pi_3
 \end{array}\right. \text{ (\ref{8}2) }
 \end{equation}
 d'où \\
 \begin{equation}
 \label{9}\left.
 \begin{array}{ll}\sqrt{\varepsilon_{p_1p_2}}=z_1\sqrt{\pi_1\pi_3}+z_2\sqrt{\pi_2\pi_4}\ \text{ (\ref{9}1) }\\
 \text{\   ou } \\ \sqrt{\varepsilon_{p_1p_2}}=z_1\sqrt{\pi_1\pi_4}+z_2\sqrt{\pi_2\pi_3}\  \text{\ (\ref{9}2) } \end{array} \right\}\end{equation}
 Avec $z_2$ est le conjugué de $z_1$ dans $\frac{1}{2}\ZZ[i]$.\\
 \indent De même il existe $a$ et $b$ dans $\NN$ tels que  $\varepsilon_{2p_1p_2}=a+b\sqrt{2p_1p_2}$, donc
 \begin{equation}\label{10}
   \left.
 \begin{array}{ll}
  \left\{
 \begin{array}{ll}
 a\mp i=(1+i)b_1^2\pi_1\pi_3\\
 x\pm i=(1-i)b_2^2\pi_2\pi_4
 \end{array}\right. \text{(\ref{10}1) ou}
 \left\{
 \begin{array}{ll}
 x\mp i=(1+i)b_1^2\pi_1\pi_4\\
 x\pm i=(1-i)b_2^2\pi_2\pi_3
 \end{array}\right. \text{(\ref{10}2) ou} \\
 \left\{
 \begin{array}{ll}
 a\mp i=i(1+i)b_1^2\pi_1\pi_3\\
 x\pm i=-i(1-i)b_2^2\pi_2\pi_4
 \end{array}\right. \text{(\ref{10}3) ou}
 \left\{
 \begin{array}{ll}
 x\mp i=i(1+i)b_1^2\pi_1\pi_4\\
 x\pm i=-i(1-i)b_2^2\pi_2\pi_3
 \end{array}\right.  \text{(\ref{10}4)}\end{array}\right\}\end{equation}
  par suite
  \begin{equation}
 \label{11}\left.
 \begin{array}{ll}\sqrt{2\varepsilon_{2p_1p_2}}=u_1\sqrt{(1+i)\pi_1\pi_3}+u_2\sqrt{(1-i)\pi_2\pi_4}\
 \text{ (\ref{11}1) ou } \\ \sqrt{2\varepsilon_{2p_1p_2}}=u_1\sqrt{(1+i)\pi_1\pi_4}+u_2\sqrt{(1-i)\pi_2\pi_3}\  \text{\ (\ref{11}2) ou} \\
 \sqrt{\varepsilon_{2p_1p_2}}=u_1\sqrt{(1+i)\pi_1\pi_3}+u_2\sqrt{(1-i)\pi_2\pi_4}\
 \text{ (\ref{11}3) ou } \\ \sqrt{\varepsilon_{2p_1p_2}}=u_1\sqrt{(1+i)\pi_1\pi_4}+u_2\sqrt{(1-i)\pi_2\pi_3}\  \text{\ (\ref{11}4) }
 \end{array} \right\}\end{equation}
 où $u_2$ et $u_1$ sont conjugués dans $\ZZ[i]$ ou $\frac{1}{2}\ZZ[i]$.
 Enfin on a:
  \begin{equation}\label{12}
  \sqrt{2\varepsilon_2}=\sqrt{1+i}+\sqrt{1-i}.
  \end{equation}
 En multipliant les égalités (\ref{9}1), (\ref{11}1) et (\ref{12}); (\ref{9}1), (\ref{11}3) et (\ref{12}); (\ref{9}2), (\ref{11}2) et (\ref{12}); (\ref{9}2), (\ref{11}4) et (\ref{12}) on trouve que $\varepsilon_2\varepsilon_{p_1p_2}\varepsilon_{2p_1p_2}$ est un carré dans $\l$. Donc $\varepsilon_2\varepsilon_{p_1p_2}\varepsilon_{2p_1p_2}$ est un carré dans $\l$ si et seulement si $x$ et $a$ prennent l'une des formes suivantes:\\
  (\ref{8}1) et  (\ref{10}1) ou  (\ref{8}1) et  (\ref{10}3) ou  (\ref{8}2) et  (\ref{10}2) ou  (\ref{8}2) et  (\ref{10}4).\\
 On applique le symbole des restes quadratiques à ces quatre formes on trouve que: $$\displaystyle\left(\frac{\pi_3}{\pi_1}\right)\displaystyle\left(\frac{2}{a_1+2b_1}
 \right)\displaystyle\left(\frac{2}{a_2+2b_2}\right)=
\displaystyle\left(\frac{\pi_3}{\pi_1}\right)\displaystyle\left(\frac{1+i}{\pi_1}\right)
 \displaystyle\left(\frac{1+i}{\pi_3}\right)=-1.$$
 Et la proposition \ref{3} nous donne $$\displaystyle\left(\frac{p_1p_2}{2}\right)_4\displaystyle\left(\frac{2p_1}{p_2}\right)_4
\displaystyle\left(\frac{2p_2}{p_1}\right)_4=-1$$
 Par contre les autres formes nous donnent que:
  $$\displaystyle\left(\frac{p_1p_2}{2}\right)_4\displaystyle\left(\frac{2p_1}{p_2}\right)_4
\displaystyle\left(\frac{2p_2}{p_1}\right)_4=
\displaystyle\left(\frac{\pi_3}{\pi_1}\right)\displaystyle\left(\frac{1+i}{\pi_1}\right)
 \displaystyle\left(\frac{1+i}{\pi_3}\right)=1.$$
 Ceci établit l'équivalence entre les assertions (1) et (2).\\
 \indent L'équivalence entre (1) et (3) est une résultat bien connue.\\
 \indent Pour les équivalences $(2)\Leftrightarrow (4)$ et $(2)\Leftrightarrow (5)$ voir \cite{Ka76}.\\
 \indent Pour l'équivalence entre  (1) et (6) voir \cite{B.S.C-94}.
\end{proof}
 \bibliographystyle{alpha}

\end{document}